\newtheorem{defn}{Definition}
\newtheorem{lem}{Lemma}
\newtheorem{thm}{Theorem}
\newtheorem{cor}{Corollary}
\newtheorem{rmk}{Remark}
\newtheorem{ex}{Example}
\newcommand{\HMF}{Hesse-Frobenius\xspace}
\newcommand{\CMF}{Curved Frobenius\xspace}
\newcommand{\Nabla}{\mathsf{D}}
\newcommand{\Ass}{\mathrm{Assoc}}
\newcommand{\As}{\mathsf{A}}
\newcommand{\sign}{\mathrm{sign}}
\title{Hessian geometry and\\ Frobenius manifolds with curvature}
\author{\textsc{Andreas Vollmer}\\
        \fontsize{10pt}{12pt}\selectfont 
        \textit{University of Hamburg},
        \textit{Department of Mathematics},
        \textit{Bundesstra{\ss}e 55, 20146 Hamburg}}
\date{\today}
\begin{document}

\maketitle

\begin{abstract}
    A Riemannian metric is called Hessian if, locally, it can be written as the Hessian of a function called the Hessian potential.
    A (flat) Manin-Frobenius manifold is a flat Riemannian manifold furnished with a commutative and associative product compatible with the metric, such that a certain potentiality property is satisfied. \emph{\CMF manifolds} generalize this concept to spaces with non-vanishing curvature, and they have applications in supersymmetric mechanics and within the theory of submanifolds.
    
    \CMF manifolds naturally arise from Hessian metrics, and we find that they, conceptually, are the typical non-trivial examples. We obtain that \CMF structures on constant curvature spaces are consistent with a Hessian structure, if they satisfy a closed prolongation system of finite type. Consistency means that the Frobenius potential and the Hessian potential can be identified.

    As an application, we show that certain second-order maximally superintegrable systems correspond 1-to-1 to \CMF structures that are consistent with a Hessian structure.
\end{abstract}

\textsc{MSC2020:} 53D45; 53B50, 53B12, 58A15

\textsc{Keywords:} Frobenius manifold, Hessian metric, superintegrability

\section{Introduction}
Let $(M,g)$ be a Riemannian manifold of dimension $n\geq2$. Throughout the paper we assume that $g$ is conformally flat. We denote by $\mathfrak X(M)$ the bundle of vector fields on $M$.
We denote the Levi-Civita connection of $g$ by $\nabla$.
For a product $\star:\mathfrak X(M)\times\mathfrak X(M)\to\mathfrak X(M)$, we call
\[
    \Ass_\star(X,Y,Z)=(X\star Y)\star Z-X\star(Y\star Z)=[\star(Z),\star(X)]Y
\]
the \emph{associator} of $\star$. We omit the subscript, whenever the underlying product is clear.
\begin{defn}\label{defn:MFmfd}
    A \emph{\CMF manifold} is a Riemannian manifold $(M,g)$ together with a product
    \[
        \star:\mathfrak X(M)\times\mathfrak X(M)\to\mathfrak X(M)
    \]
    such that
    \begin{enumerate}
        \item $\star$ is commutative
        \item $\star$ is compatible with $g$, i.e.\
        \begin{equation}
            g(X\star Y,Z)=g(X,Y\star Z)
        \end{equation}
        where $X,Y,Z\in\mathfrak X(M)$
        \item the associator of $\star$ satisfies
        \begin{equation}\label{eq:MFmfd.curv.cond}
            [\star(X),\star(Y)]=\mu\,R(X,Y)
        \end{equation}
        for a constant $\mu$.
        \item $\nabla\star$ is symmetric (in its contravariant arguments)
    \end{enumerate}
\end{defn}

\noindent \CMF manifolds generalize the (flat) \emph{Manin-Frobenius manifolds}, which are recovered, if $g$ is assumed to be flat:
\begin{ex}
    We call a \CMF manifold whose underlying metric $g$ is a flat metric, i.e.\ $R=0$, a \emph{Manin-Frobenius manifold}. In this case, the underlying product $\star$ is commutative, associative and $g$-compatible. The symmetry of $\nabla\star$, in this case, is called the \emph{potentiality property}, e.g.\ \cite{Manin1999,Hertling2002_moduli}.
\end{ex}

\noindent There exists a vast literature on Manin-Frobenius manifolds. They appear in many contexts such as topological and quantum field theory \cite{Witten1991,DVV1991,Dubrovin1996,Dubrovin1998,Manin1999}, Gromov-Witten theory \cite{Hertling2002_moduli,HM2012_cohom} and information geometry \cite{CM2020}.
Examples with curvature also naturally arise in various contexts.
For instance, the condition~\eqref{eq:MFmfd.curv.cond} appears as curved Witten-Dijkgraaf-Verlinde-Verlinde equation in \cite{KKLNS2017,KKLNS2018,Kozyrev2019}, establishing a correspondence between certain \CMF manifolds and $\mathcal N=4$ supersymmetric models on manifolds of dimension $n$.
\CMF manifolds also appear naturally in the theory of submanifolds. Specifically, \cite{Mokhov2008} establishes a duality relationship between \CMF manifolds and a special class of submanifolds, namely the so-called submanifolds with potential of normals.
A further application was identified by the author in \cite{Vollmer2025_Frobenius}, in the context of second-order superintegrable systems.
Examples of \CMF manifolds with non-vanishing curvature naturally arise in Hessian geometry.
\begin{defn}
    A Riemannian metric $g$ is called \emph{Hessian}, if there is a flat connection $\Nabla$ such that, in a neighborhood of any point of $M$,
    \[
        g = \Nabla df
    \]
    for a (locally defined) function $f$.
    The function $f$ is called \emph{Hessian potential}.
    A \emph{Hessian structure} is a triple $(M,g,\Nabla)$ of this kind.
\end{defn}

\begin{ex}[\cite{KKLNS2017}]\label{ex:hessian}
    Let $(M,g,\Nabla)$ be a Hessian manifold. It follows in particular that $\Nabla$ is flat and, locally, we have
    \[
        g=\Nabla d\phi\,.
    \]
    Then $\hat P=\nabla-\Nabla$ is a $(1,2)$-tensor field and Codazzi, i.e.\ $\nabla\hat P$ is symmetric in its contravariant arguments. Moreover, $P=\hat P^\flat$ is a totally symmetric $(0,3)$-tensor field with
    \[
        P=\frac12\,\Nabla^3\phi
    \]
    locally, where $\phi$ is defined.
    It follows that
    \[
        R(X,Y)=[\hat P(Y),\hat P(X)]
    \]
    and, therefore, $\star:=\hat P$ defines a commutative product on $\mathfrak X(M)$ that is $g$-compatible and satisfies the potentiality property. Its associator satisfies the condition in the definition of a \CMF manifold.
    Hence, $(M,g,\Nabla=\nabla\pm\star)$ both satisfy the conditions of a \CMF manifold.
\end{ex}

This example is discussed in \cite{KKLNS2017}, see also the references therein.

\section{Initial observations}

We begin with some basic findings that are useful for the characterization of \CMF manifolds. 
In particular, the following observation is a useful tool for the study of \CMF structures. Its proof is straightforward.
\begin{lem}\label{lem:normal.form}
    Let $(M,g,\star)$ be a \CMF manifold with
    \[
        [\star(X),\star(Y)]=\mu R(X,Y)\,.
    \]
    We define a new product $\circ$ by
    \begin{equation}\label{eq:normalization}
        \circ:=\begin{cases}
                0 & \text{if $\mu=0$} \\
                \frac{1}{\sqrt{|\mu|}}\star & \text{if $\mu\ne0$}
        \end{cases}\,.
    \end{equation}
    Then $(M,g,\circ)$ is a \CMF manifold with
    \[
        [\circ(X),\circ(Y)]=\varepsilon R(X,Y)\,,
    \]
    where $\varepsilon\in\{-1,0,+1\}$, $\varepsilon=\sign(\mu)$.
\end{lem}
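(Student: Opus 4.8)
The plan is to verify the four defining properties of a \CMF manifold directly for $\circ$, handling the cases $\mu=0$ and $\mu\ne0$ separately. The guiding observation is that $\circ$ differs from $\star$ only by multiplication by a \emph{constant}: every property that is linear in the product descends immediately, whereas the associator condition is homogeneous of degree two and therefore picks up the square of that constant — which is exactly what renormalizes $\mu$ to its sign.

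For $\mu=0$ I would simply take $\circ=0$. Commutativity, $g$-compatibility, and the symmetry of $\nabla\circ=0$ are then immediate, and $[\circ(X),\circ(Y)]=0=\varepsilon R(X,Y)$ with $\varepsilon=\sign(0)=0$; so $(M,g,0)$ is a \CMF manifold and nothing more is needed.

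For $\mu\ne0$ I would set $c=1/\sqrt{|\mu|}$, a nonzero constant, so that $\circ=c\,\star$ and, viewed as multiplication endomorphisms, $\circ(X)=c\,\star(X)$ for every $X$. Commutativity of $\circ$ follows from that of $\star$ by scaling, and $g$-compatibility follows from $g(X\circ Y,Z)=c\,g(X\star Y,Z)=c\,g(X,Y\star Z)=g(X,Y\circ Z)$. Since $c$ is constant, $\nabla\circ=c\,\nabla\star$ is symmetric in its contravariant arguments precisely because $\nabla\star$ is. It then remains to compute
\[
    [\circ(X),\circ(Y)]=[c\,\star(X),c\,\star(Y)]=c^{2}\,[\star(X),\star(Y)]=c^{2}\mu\,R(X,Y),
\]
and since $c^{2}\mu=\mu/|\mu|=\sign(\mu)=\varepsilon$ this equals $\varepsilon R(X,Y)$, as required.

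I do not expect a genuine obstacle. The only points deserving attention are that $\mu$, and hence $c$, is a constant — which is what lets $\nabla$ pass through the scalar factor without producing extra terms — and that the associator condition is quadratic in the product, which is what generates the factor $c^{2}$ and fixes the normalization $\varepsilon=\sign(\mu)$.
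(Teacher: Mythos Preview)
Your proposal is correct and is exactly the straightforward verification the paper has in mind; the paper itself does not spell out any details beyond stating that the proof is straightforward. Your observation that the linear axioms scale with $c$ while the associator scales with $c^{2}$ is precisely the point, and the case split together with the constancy of $\mu$ (hence of $c$) covers everything.
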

\noindent The sign of $\mu$ is zero, if and only if $\mu=0$, and otherwise it is $\sign(\mu)=\frac{\mu}{|\mu|}$. We call $\varepsilon$ the \emph{signature} of the \CMF manifold.
Also, we say that $(M,g,\circ)$ is the \emph{normalization} of $(M,g,\star)$.

\begin{defn}
    We say that two \CMF manifolds are \emph{equivalent up to rescaling}, if their normalizations coincide. 
\end{defn}

\begin{ex}\label{ex:mu=0}
    The simplest case of \CMF manifolds is that with $\mu=0$. In this case, the endomorphisms $P_X:\mathfrak X(M)\to\mathfrak X(M)$, $P_XY=\hat P(X,Y)$, $X,Y\in\mathfrak X(M)$, always necessarily commute.
    Hence, $\star$ in Definition~\ref{defn:MFmfd} can be any commutative and associative product that is compatible with the metric $g$, and such that the potentiality property is satisfied. In particular, if and only if the metric is flat, the product $\star$ defines a Manin-Frobenius manifold with underlying metric $g$.
\end{ex}

We continue our exploration with the associator of \CMF manifolds.
Note that the associator $\Ass$ of $\star$ induces a map $\As:\Gamma(\Lambda^2(TM))\to\mathrm{End}(TM)$ via
\begin{equation}
    \As(X\wedge Y)(Z)=\Ass(X,Z,Y)\,.
\end{equation}
For our next lemma, we introduce Lie triple systems.
\begin{defn}
    A \emph{Lie triple system} is a tri-linear map $C:V^3\to V$ on a (finite dimensional) vector space $V$ such that
    \begin{align*}
        C(u,v,w)&=-C(v,u,w)
        \\
        0&=C(u,v,w)+C(v,w,u)+C(w,u,v)
        \\
        C(u,v,C(w_1,w_2,w_3))&=C(C(u,v,w_1),w_2,w_3)
        \\ & \qquad +C(w_1,C(u,v,w_2))+C(w_1,w_2,C(u,v,w_3))
    \end{align*}
    for $u,v,w,w_1,w_2,w_3\in V$.
\end{defn}
We observe that the associator of $\star$ in a \CMF manifold is always skew-symmetric and that it satisfies the cyclic condition. Indeed, due to the Bianchi identity, we have for a \CMF manifold $(M,g,\star)$ that the map $C:\mathfrak X(M)\times\mathfrak X(M)\times\mathfrak X(M)\to \mathfrak X(M)$,
\[
    C(X,Y,Z)=\As(X\wedge Y)(Z),
\]
satisfies the cyclic identity
\begin{equation}
    C(X,Y,Z)+C(Y,Z,X)+C(Z,X,Y)=0,
\end{equation}
proving the claim.
In particular, for \CMF manifolds of constant sectional curvature, we have obtained:
\begin{lem}
    Let $(M,g,\star)$ be a (normalized) \CMF manifold of constant sectional curvature. Then the following two statements are true:
    \begin{enumerate}
        \item The map $C:\mathfrak X(M)\times\mathfrak X(M)\times\mathfrak X(M)\to \mathfrak X(M)$,
        \[
            C(X,Y,Z)=\As(X\wedge Y)(Z),
        \]
        defines a Lie triple structure.
        \item The map $\As$ takes values in $\mathfrak{so}(n)$. The map is either trivial or a constant multiple of the standard map $\Psi:\Gamma(\Lambda^2TM)\to\mathrm{End}(TM)$,
        \[
            \Psi: X\wedge Y\mapsto\left( Z\mapsto g(X,Z)Y-g(Y,Z)X \right)\,.
        \]
        It is trivial precisely if $g$ is flat.
    \end{enumerate}
\end{lem}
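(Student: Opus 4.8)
The plan is to compute the endomorphism $\As(X\wedge Y)$ explicitly and to read off both statements from its closed form. First I would record that, by the associator identity stated in the introduction, $\As(X\wedge Y)(Z)=\Ass(X,Z,Y)=[\star(Y),\star(X)]Z$, so that $\As(X\wedge Y)=[\star(Y),\star(X)]$ as an endomorphism of $TM$. Compatibility of $\star$ with $g$ means each $\star(W)$ is $g$-self-adjoint, and the commutator of two $g$-self-adjoint operators is $g$-skew-adjoint; this already shows that $\As$ takes values in $\mathfrak{so}(n)$, and it needs neither the curvature condition nor constant curvature.

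Next I would invoke the (normalized) curvature condition $[\star(A),\star(B)]=\varepsilon R(A,B)$ to obtain $\As(X\wedge Y)=-\varepsilon R(X,Y)$, and then substitute the constant-sectional-curvature form $R(X,Y)Z=\kappa\bigl(g(Y,Z)X-g(X,Z)Y\bigr)$. This yields $\As(X\wedge Y)(Z)=\varepsilon\kappa\bigl(g(X,Z)Y-g(Y,Z)X\bigr)=\varepsilon\kappa\,\Psi(X\wedge Y)(Z)$, i.e.\ $\As=\varepsilon\kappa\,\Psi$. Hence $\As$ is a constant multiple of the standard map $\Psi$; since $\Psi\ne0$ for $n\ge2$, the map is trivial exactly when the scalar $\varepsilon\kappa$ vanishes, which (for a genuinely curved normalized structure, $\varepsilon=\pm1$) is equivalent to $\kappa=0$, i.e.\ to $g$ being flat. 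This settles the second statement.

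For the first statement I would write $C(X,Y,Z)=\As(X\wedge Y)(Z)=\varepsilon\kappa\,\Psi(X\wedge Y)(Z)$ and verify the three Lie-triple axioms. Skew-symmetry in the first two arguments is immediate from $\Psi(X\wedge Y)=-\Psi(Y\wedge X)$ (equivalently from the skew-symmetry of $R$), and the cyclic identity was already established before the lemma from the first Bianchi identity. Because the third (Jacobi-type) axiom is quadratic in $C$, both of its sides scale by $(\varepsilon\kappa)^2$, so it suffices to verify it for $\Psi$ alone. There it reduces to the statement that every $A\in\mathfrak{so}(n)$ acts as a derivation of the trilinear map $\Psi$, namely $A\,\Psi(w_1\wedge w_2)w_3=\Psi(Aw_1\wedge w_2)w_3+\Psi(w_1\wedge Aw_2)w_3+\Psi(w_1\wedge w_2)Aw_3$. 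Expanding $\Psi$ and collecting terms, the undesired contributions cancel precisely because $A$ is $g$-skew ($g(Aw_i,w_j)+g(w_i,Aw_j)=0$), which is exactly the $O(n)$-equivariance of $\Psi$.

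I expect the only genuine work to be this last identity, the derivation/Jacobi property; everything else is bookkeeping with the curvature symmetries. Conceptually this is the classical fact that a space of constant sectional curvature is locally symmetric, so that $-R$ endows each tangent space with its standard Lie triple system, and the role of the \CMF structure is merely to reproduce $R$ (up to the sign $\varepsilon$) as the associator via the normalized curvature condition. A subtle point I would flag is the degenerate normalization $\varepsilon=0$ (product identically zero), where $\As$ vanishes irrespective of $\kappa$; accordingly I would note that the equivalence ``trivial $\iff$ flat'' is intended for the nondegenerate case $\varepsilon=\pm1$.
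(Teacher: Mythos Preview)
Your proposal is correct and follows essentially the same line as the paper, which leaves the lemma as an immediate consequence of the preceding observations (skew-symmetry, the Bianchi cyclic identity, and the curvature condition in constant sectional curvature). You supply more detail than the paper does, in particular the explicit verification of the Jacobi-type axiom via the $O(n)$-equivariance of $\Psi$, and your remark about the degenerate case $\varepsilon=0$ is a useful clarification the paper omits.
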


Consider a Manin-Frobenius manifold, i.e.\ that $g$ is flat. Then, both $C$ and $\Psi$ are trivial. 
In this case, \eqref{eq:MFmfd.curv.cond} implies $[\star(X),\star(Y)]=0$
and hence
\[
    \hat P(X,\hat P(Y,Z))-\hat P(\hat P(X,Z),Y) = X\star(Y\star Z)-Y\star(X\star Z)=0\,,
\]
where we write $\hat P=\star$ for better legibility.
It follows that
\begin{multline*}
    (X\star X)\star(X\star Y)-X\star((X\star X)\star Y) \\
    = \hat P(X,X)\left( \hat P(-,\hat P(X,Y))-\hat P(X,\hat P(-,Y) \right)=0\,.
\end{multline*}
Since $\star$ is also commutative, it follows that $\star$ defines a Jordan algebra on $\mathfrak X(M)$. 

\section{Hesse-Frobenius structures}

We characterize \CMF manifolds that arise from Hessian structures.
In preparation, consider the tensor field $P\in\Gamma(\mathrm{Sym}_3(T^*M))$, $P(X,Y,Z)=g(X\star Y,Z)$.
Note that, because of commutativity, $P$ is symmetric in its first two arguments. It is then totally symmetric because of $g$-compatibility.
Since $\nabla\star$ is symmetric, it furthermore follows that $\nabla P$ is symmetric and, hence, that $P$ is a (rank-$3$) Codazzi tensor.

Solutions of rank-$3$ Codazzi equations are discussed in \cite{LSW1998,SSM2019}.
For example, in the case of a flat manifold, the tensor field $P$ is, locally, the third derivative of a (locally defined) function $\psi$, $P_{ijk}=\nabla^3_{ijk}\psi$.
Contrast this with the local existence of a Hessian potential $\phi$, which implies $P_{ijk}=\frac12\Nabla^3_{ijk}f$ on a (maybe smaller) local neighborhood.
Neither of these two potentials $\psi$ and $\phi$ is unique, but we may ask whether it is possible to choose~$\phi$ and~$\psi$ such that they coincide.
We will return to this question momentarily, but first we need to clarify further when a \CMF manifold arises as in Example~\ref{ex:hessian}.
\begin{lem}\label{lem:HMF}
    \begin{enumerate}
        \item\label{item:HMF.1} Let $(M,g,\star)$ be a \CMF manifold with $\mu<0$. Then its normalization $(M,g,\circ)$ arises as in Example~\ref{ex:hessian} from the Hessian structure $(g,\Nabla=\nabla-\star)$.
        \item\label{item:HMF.2} Assume that $g$ is flat and that $(M,g,\star)$ is a \CMF manifold. Then there is a real number $r\in\mathbb R$ and a product $\circ=r\star$, such that $(M,g,\circ)$ arises as in Example~\ref{ex:hessian} from the Hessian structure $(g,\Nabla=\nabla-\star)$.
        \item\label{item:HMF.3} Consider a \CMF manifold $(M,g,\star)$. If there is a real number $r\in\mathbb R$ such that $(M,g,\Nabla=\nabla-r\circ)$ is Hessian, then $(M,g,\star)$ is as in the first or second number of this theorem.
    \end{enumerate}
\end{lem}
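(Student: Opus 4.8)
The plan is to distill all three statements from a single curvature identity. Throughout write $\hat P$ for the $(1,2)$-tensor field attached to the rescaled product relevant in each case (so $\hat P=\circ$ for part~\ref{item:HMF.1}, $\hat P=r\star$ for part~\ref{item:HMF.2}, and $\hat P=r\circ$ for part~\ref{item:HMF.3}), denote by $\hat P(X)$ the endomorphism $Z\mapsto\hat P(X,Z)$, and set $\Nabla=\nabla-\hat P$. Since $\hat P$ is symmetric (commutativity) and $\nabla$ is torsion-free, $\Nabla$ is again torsion-free, and the difference formula for curvatures reads
\[
    R(X,Y)Z=R^\Nabla(X,Y)Z+(\Nabla_X\hat P)(Y,Z)-(\Nabla_Y\hat P)(X,Z)+[\hat P(X),\hat P(Y)]Z.
\]
The one genuine calculation is to convert the $\Nabla$-derivatives of $\hat P$ into the potentiality of $\star$, which is phrased through $\nabla$. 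Expanding $(\nabla_X\hat P)(Y,Z)-(\Nabla_X\hat P)(Y,Z)$ in terms of $\hat P$ and antisymmetrising in $X,Y$, commutativity kills the $\hat P(X,Y)$-contribution and leaves twice the commutator; since $\nabla\hat P$ is symmetric this gives $(\Nabla_X\hat P)(Y,Z)-(\Nabla_Y\hat P)(X,Z)=-2[\hat P(X),\hat P(Y)]Z$. Substituting yields the master identity
\[
    R(X,Y)=R^\Nabla(X,Y)-[\hat P(X),\hat P(Y)].
\]

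For parts~\ref{item:HMF.1} and~\ref{item:HMF.2} I would run this forward. In part~\ref{item:HMF.1} the normalization has $[\circ(X),\circ(Y)]=\varepsilon R(X,Y)=-R(X,Y)$ because $\mu<0$ forces $\varepsilon=-1$, so the master identity collapses to $R^\Nabla=0$; in part~\ref{item:HMF.2} flatness of $g$ gives $R=0$ and $[\hat P(X),\hat P(Y)]=r^2[\star(X),\star(Y)]=0$, again forcing $R^\Nabla=0$ (here $r=1$ already works, matching $\Nabla=\nabla-\star$). In both cases $\Nabla$ is flat and torsion-free. A short computation using $\nabla g=0$ together with total symmetry of $P$ shows $(\Nabla_Xg)(Y,Z)=2P(X,Y,Z)$ (rescaled accordingly), so $\Nabla g$ is totally symmetric; passing to affine coordinates for $\Nabla$ and applying the Poincaré lemma produces a local potential $\phi$ with $g=\Nabla d\phi$. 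Thus $(g,\Nabla)$ is Hessian, and since $\hat P=\nabla-\Nabla$, the product is precisely the one of Example~\ref{ex:hessian}, including the sign in $R=[\hat P(Y),\hat P(X)]$.

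Statement~\ref{item:HMF.3} is the converse and is the real point. Here I would run the identity backwards: the hypothesis that $(g,\Nabla=\nabla-r\circ)$ is Hessian gives $R^\Nabla=0$, so the master identity reads $R=-[\hat P(X),\hat P(Y)]=-r^2\varepsilon\,R$, that is,
\[
    (1+r^2\varepsilon)\,R(X,Y)=0.
\]
If $g$ is not flat, then $R\neq0$ forces $1+r^2\varepsilon=0$; since $r^2\geq0$ this is possible only for $\varepsilon=-1$ (and $r^2=1$), i.e.\ $\sign(\mu)=-1$, which is precisely the situation of part~\ref{item:HMF.1}. If instead $R=0$, then $g$ is flat and we are in the situation of part~\ref{item:HMF.2}. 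Hence a Hessian realisation can occur only in one of the two regimes already described.

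The main obstacle is the bookkeeping in the first paragraph: the potentiality of $\star$ is stated via the Levi-Civita connection, whereas the curvature comparison naturally involves $\Nabla$-derivatives of $\hat P$, and the two differ by exactly $2[\hat P(X),\hat P(Y)]$. Getting this factor and its sign right is what makes the two commutator contributions combine into the single clean term $-[\hat P(X),\hat P(Y)]$; any slip there would destroy the sign analysis $(1+r^2\varepsilon)R=0$ on which part~\ref{item:HMF.3} rests.
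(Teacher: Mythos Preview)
Your argument is correct and rests on the same curvature identity that underlies Example~\ref{ex:hessian}; the paper's proof simply normalises to $\mu\in\{-1,0,1\}$ and then defers all three claims to that example, whereas you derive the master identity $R=R^{\Nabla}-[\hat P(X),\hat P(Y)]$ from scratch, verify the existence of the Hessian potential explicitly via the total symmetry of $\Nabla g$ and the Poincar\'e lemma, and make the sign dichotomy $(1+r^2\varepsilon)R=0$ for part~\ref{item:HMF.3} explicit where the paper just writes ``the claim follows.'' So the route is essentially the same, only more self-contained.
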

\noindent In short, Example~\ref{ex:hessian} essentially covers Manin-Frobenius manifolds as well as non-flat \CMF manifolds with $\mu<0$.
\begin{proof}
    In Example~\ref{ex:hessian}, the \CMF structure is already in normalized form. We can therefore work directly with the normalization and assume $\mu\in\{-1,0,1\}$ without loss of generality.
    
    Claim~\ref{item:HMF.1} then follows by Example~\ref{ex:hessian}.
    The same is true for the claim~\ref{item:HMF.2}, since the right hand side in \eqref{eq:MFmfd.curv.cond} vanishes due to the flatness of $g$.

    For claim~\ref{item:HMF.3}, we obtain $\Nabla$ and $P$ as in Example~\ref{ex:hessian}. Note that the flatness of $\Nabla$ immediately implies \eqref{eq:MFmfd.curv.cond}, and that $P$ is a Codazzi tensor, i.e.\ that $\nabla P$ is symmetric. The claim follows.
\end{proof}

We now specialize to \CMF structures of constant sectional curvature. In this case, the Codazzi equation $\nabla_kP_{ij}^\ell-\nabla_kP_{ik}^\ell=0$ can be solved explicitly.
Indeed, from \cite{Ferus1981,LSW1998}, it follows that, locally, there is a function $\psi$ such that
\[
    P_{ijk}=\nabla^3_{(ijk)}\psi+4\kappa\,g_{(ij}\nabla_{k)}\psi
    =\nabla^3_{ijk}\psi
    +\kappa\,(2g_{ij}\nabla_k\psi+g_{ik}\nabla_j\psi+g_{jk}\nabla_i\psi)\,.
\]
Here, by $\nabla^3_{ijk}\psi$ we mean the triple covariant derivative in the order $\nabla_k\nabla_j\nabla_i\psi$.
\begin{lem}
    In a \CMF manifold $(M,g,\star)$ of constant sectional curvature $\kappa$, the Hessian potential $\phi$,
    \[
        g=\Nabla d\phi
    \]
    and the Frobenius potential $\psi$,
    \[
        P_{ijk}=\nabla^3_{ijk}\psi
        +\kappa\,(2g_{ij}\nabla_k\psi+g_{ik}\nabla_j\psi+g_{jk}\nabla_i\psi)\,,
    \]
    satisfy, in a neighborhood where both are simultaneously defined,
    \begin{equation}\label{eq:difference.of.potentials}
        \nabla^3_{ijk}(\psi+\phi) = 
        -\nabla_kP_{ij}^a\nabla_a\phi
        +P_{ij}^bP_{bk}^a\nabla_a\phi
        -\kappa\,(2g_{ij}\nabla_k\psi+g_{ik}\nabla_j\psi+g_{jk}\nabla_i\psi)\,.
    \end{equation}    
\end{lem}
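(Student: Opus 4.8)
The plan is to rewrite both defining conditions in terms of the single Levi-Civita connection $\nabla$, differentiate the Hessian relation once, and then eliminate the emerging second-order term by re-using that same relation. The constant-curvature hypothesis will enter only at the very end, through the explicit Frobenius potential.

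First I would unfold the Hessian condition $g=\Nabla d\phi$. Since $\Nabla=\nabla-\star$, the two connections differ by $\hat P=\star$, so their Christoffel symbols obey $\Gamma^k_{ij}-\tilde\Gamma^k_{ij}=P_{ij}^k$. Comparing the $\Nabla$-Hessian of the scalar $\phi$ with its $\nabla$-Hessian then yields the pointwise bridge identity
\[
    g_{ij}=\nabla_i\nabla_j\phi+P_{ij}^a\nabla_a\phi\,.
\]
This is the crucial step: it expresses the parallel metric through the $\nabla$-Hessian of $\phi$, corrected by a first-order term built from $P$.

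Next I would apply $\nabla_k$ to this identity. Because $\nabla g=0$, the left-hand side vanishes, and since the Hessian of a scalar is symmetric (so that $\nabla^3_{ijk}\phi=\nabla_k(\nabla_i\nabla_j\phi)$ with no curvature correction at this step), one obtains
\[
    \nabla^3_{ijk}\phi=-(\nabla_kP_{ij}^a)\nabla_a\phi-P_{ij}^a\,\nabla_k\nabla_a\phi\,.
\]
I would then substitute the bridge identity once more to replace the remaining second derivative, $\nabla_k\nabla_a\phi=g_{ak}-P_{ak}^b\nabla_b\phi$. After contracting $P_{ij}^ag_{ak}=P_{ijk}$ and relabelling summation indices, this gives
\[
    \nabla^3_{ijk}\phi=-(\nabla_kP_{ij}^a)\nabla_a\phi+P_{ij}^bP_{bk}^a\nabla_a\phi-P_{ijk}\,.
\]

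Finally I would insert the Frobenius-potential formula $P_{ijk}=\nabla^3_{ijk}\psi+\kappa(2g_{ij}\nabla_k\psi+g_{ik}\nabla_j\psi+g_{jk}\nabla_i\psi)$ and move $\nabla^3_{ijk}\psi$ to the left-hand side, which produces exactly \eqref{eq:difference.of.potentials}. The one genuine subtlety, and the step I would check most carefully, is the first one: getting the sign and tensorial shape of the correction term $P_{ij}^a\nabla_a\phi$ right when passing between the $\Nabla$- and $\nabla$-Hessians, since the entire computation rests on the bridge identity (and the opposite sign convention $\Nabla=\nabla+\star$ produces a genuinely different formula). Everything downstream is bookkeeping of index contractions, and the constant-curvature hypothesis enters only through the availability of the explicit potential $\psi$, not through any curvature commutator, because the sole reordering of covariant derivatives occurs on a scalar.
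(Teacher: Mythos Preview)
Your argument is correct and follows essentially the same route as the paper: both proofs first establish the identity $-P_{ijk}=\nabla^3_{ijk}\phi+(\nabla_kP_{ij}^a)\nabla_a\phi-P_{ij}^bP_{bk}^a\nabla_a\phi$ from the Hessian condition and then combine it with the Frobenius-potential formula. The only organizational difference is that you apply $\nabla_k$ to the bridge identity $g_{ij}=\nabla^2_{ij}\phi+P_{ij}^a\nabla_a\phi$ (using $\nabla g=0$) and then re-substitute it, whereas the paper applies $\Nabla_k$ to the same identity (using $\Nabla^3\phi=2P$); your version is slightly cleaner since it never leaves the Levi-Civita connection.
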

\begin{proof}
    We have $\Nabla=\nabla-\star$ and define $P=\star^\flat$. We also write $\star=\hat P$.
    We then compute
    \begin{align*}
        2P_{ijk} &= \Nabla^3_{ijk}\phi
        = \Nabla_k\left( \nabla^2_{ij}\phi+\hat P_{ij}^a\nabla_a\phi\right)
        \\
        &= \nabla^3_{ijk}\phi+\nabla_k\hat P_{ij}^a\nabla_a\phi+P_{ijk}-\hat P_{ij}^b\hat P_{bk}^a\nabla_a\phi+2P_{ijk}\,.
    \end{align*}
    We conclude that
    \[
        -P_{ijk} = \nabla^3_{ijk}\phi+\nabla_k\hat P_{ij}^a\nabla_a\phi
        -\hat P_{ij}^b\hat P_{bk}^a\nabla_a\phi\,.
    \]
    Moreover, we have
    \[
        P_{ijk}=\nabla^3_{ijk}\psi
        +\kappa\,(2g_{ij}\nabla_k\psi+g_{ik}\nabla_j\psi+g_{jk}\nabla_i\psi)\,,
    \]
    and hence
    \[
        \nabla^3_{ijk}(\psi+\phi)
        +\kappa\,(2g_{ij}\nabla_k\psi+g_{ik}\nabla_j\psi+g_{jk}\nabla_i\psi)
        +\nabla_k\hat P_{ij}^a\nabla_a\phi
        -\hat P_{ij}^b\hat P_{bk}^a\nabla_a\phi = 0\,.
    \]
\end{proof}

Note that neither $\phi$ nor $\psi$ are uniquely determined. Indeed, the Hessian potential is unique only up to the addition of $\Nabla$-affine terms. Similarly, $\psi$ can be altered by adding any solution $\chi$ of
\[
    \nabla^3_{ijk}\chi
        +\kappa\,(2g_{ij}\nabla_k\chi+g_{ik}\nabla_j\chi+g_{jk}\nabla_i\chi)=0\,,
\]
since this does not change the underpinning \CMF structure.
It is thus a natural question whether the Hessian potential $\phi$ and the Frobenius potential $\psi$ can be chosen to coincide.
In order to answer this question, we introduce for \CMF manifolds a concept of consistency with a Hessian structure.

\begin{defn}\label{defn:HMF.mfd}
    We say that a \CMF manifold $(M,g,\star)$ is a \emph{\HMF manifold} of constant sectional curvature $\kappa$, if there exists a Hessian structure $(g,\Nabla)$ such that
    \begin{equation}
        \star:\mathfrak X(M)\times\mathfrak X(M)\to\mathfrak X(M)\,,\qquad
        X\star Y:=\nabla_XY-\Nabla_XY\,.
    \end{equation}
    and such that, locally around any given point, any Hessian potential $f$ of $(g,\Nabla)$ satisfies
    \[
        P_{ijk}=-\nabla^3_{(ijk)}f-4\kappa\,g_{(ij}\nabla_{k)}f\,,
    \]
    where round brackets indicate symmetrization in the enclosed indices.
\end{defn}

\noindent \HMF manifolds are rich geometric structures. They are interesting as geometric structures in their own right. We will also explain later how they naturally appear in mathematical physics, and more specifically in the context of superintegrable Hamiltonian systems.
Examples can also be found in supersymmetric mechanics \cite{KKLNS2017,KKLNS2018} and submanifold theory \cite{Mokhov2008}.
Definition~\ref{defn:HMF.mfd} however has limitations as it requires the underlying metric to have constant sectional curvature. This requirement ensures the existence of the Frobenius potential via the solution of the Codazzi equation described in \cite{Ferus1981,LSW1998}. Yet one quickly realizes that whenever there is an explicit solution of $\nabla_kP_{ijl}-\nabla_jP_{ikl}=0$ of the form
\[
    P_{ijk} = \nabla^3_{ijk}\psi+\Theta_{ijk}^a\nabla_a\psi,
\]
where $\Theta$ depends only on the data $(g,P)$, and not on the choice of $\psi$, then a condition similar to the above one can be obtained.

We now characterize \CMF manifolds of constant sectional curvature that arise from Hessian structures.

\begin{thm}
    Assume that $g$ is of constant sectional curvature $\kappa$.
    
    In a \HMF manifold $(M,g,\star)$, the product satisfies the first-order prolongation system
    \begin{equation}\label{eq:HMF.condition}
        \nabla_k\star_{ij}^\ell=\star_{ij}^a\star_{ak}^\ell
        +\kappa\,(2g_{ij}g_k^\ell+g_{ik}g_j^\ell+g_{jk}g_i^\ell)\,.
    \end{equation}
    Conversely, if the latter condition holds for a commutative and $g$-compatible product $\star$, and if $\nabla\star$ is symmetric, then $(M,g,\star)$ is a \HMF structure.
\end{thm}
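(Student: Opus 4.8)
The plan is to treat both implications through the single relation $g=\Nabla df$, written in $\nabla$-terms as $g_{ij}=\nabla^2_{ij}f+\star_{ij}^a\nabla_af$, and to differentiate it once covariantly. Combined with the explicit constant-curvature solution of the Codazzi equation recorded above (equivalently, the \HMF consistency relation of Definition~\ref{defn:HMF.mfd}), one derivative already produces the full first-order prolongation system, up to a single genuinely non-trivial point in the forward direction.

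For the forward implication I would start from a \HMF structure, fix a Hessian potential $f$, and apply $\nabla_k$ to $g_{ij}=\nabla^2_{ij}f+\star_{ij}^a\nabla_af$. Using $\nabla_kg_{ij}=0$ and re-expressing the second derivative $\nabla_k\nabla_af=\nabla^2_{ak}f=g_{ak}-\star_{ak}^b\nabla_bf$ again via $g=\Nabla df$, this yields
\[
    0=\nabla^3_{ijk}f+(\nabla_k\star_{ij}^a)\nabla_af+P_{ijk}-\star_{ij}^a\star_{ak}^b\nabla_bf .
\]
I would then eliminate $\nabla^3_{ijk}f$ by means of the consistency relation of Definition~\ref{defn:HMF.mfd}, rewritten in the ordered form $\nabla^3_{ijk}f=-P_{ijk}-\kappa(2g_{ij}\nabla_kf+g_{ik}\nabla_jf+g_{jk}\nabla_if)$ through the symmetrization identity already noted. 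The terms $P_{ijk}$ and the $\star\star$-terms then cancel pairwise, leaving the contracted identity
\[
    \big(\nabla_k\star_{ij}^\ell-\star_{ij}^a\star_{ak}^\ell-\kappa(2g_{ij}g_k^\ell+g_{ik}g_j^\ell+g_{jk}g_i^\ell)\big)\nabla_\ell f=0 .
\]
The crux is to upgrade this contraction to the full tensor equation~\eqref{eq:HMF.condition}. Here I would use that the identity holds for \emph{every} Hessian potential, and that two Hessian potentials of $(g,\Nabla)$ differ by a $\Nabla$-affine function; in $\Nabla$-flat coordinates the differentials of these affine functions exhaust $T_p^*M$ at each point, so the gradients $\{df\}$ span the cotangent space pointwise. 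Consequently the bracketed tensor must vanish identically, which is precisely~\eqref{eq:HMF.condition}.

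For the converse I would set $\Nabla:=\nabla-\star$. Commutativity makes the difference tensor symmetric, so $\Nabla$ is torsion-free, and $\star=\nabla-\Nabla$ holds by construction. Flatness of $\Nabla$ is the main computation: expanding $R^\Nabla$ by the standard curvature-difference formula and substituting the prolongation system~\eqref{eq:HMF.condition} for every occurrence of $\nabla\star$, the $\kappa$-terms reassemble to $-R$ in constant curvature and cancel the Riemann tensor, while the quadratic $\star\star$-terms cancel in pairs by commutativity; hence $R^\Nabla=0$. Since $\Nabla_kg_{ij}=2P_{ijk}$ is totally symmetric (because $P$ is), flatness and torsion-freeness give that $(g,\Nabla)$ is a Hessian structure, so locally $g=\Nabla df$. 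Flatness of $\Nabla$ together with Example~\ref{ex:hessian} then supplies the missing curvature condition $[\star(X),\star(Y)]=-R(X,Y)$, so that $(M,g,\star)$ is genuinely a \CMF manifold. Finally, repeating the differentiation of $g=\Nabla df$ as above but inserting the prolongation system directly, the $\star\star$-terms cancel and one reads off $P_{ijk}=-\nabla^3_{ijk}f-\kappa(2g_{ij}\nabla_kf+g_{ik}\nabla_jf+g_{jk}\nabla_if)$, which is the consistency relation for an arbitrary Hessian potential; thus $(M,g,\star)$ is \HMF.

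I expect the only real obstacle to be the passage from the contracted identity to the full tensor equation in the forward direction: a single potential records only one covector's worth of information, and it is essential to exploit the $(n{+}1)$-parameter family of Hessian potentials to conclude that the prolongation defect vanishes as a tensor. The remaining steps — the flatness cancellation and the verification of the consistency relation — are direct computations once the prolongation system is substituted, and it is the constant-curvature form of $R$ that makes the $\kappa$-terms close up.
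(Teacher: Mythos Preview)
Your proposal is correct and follows essentially the same route as the paper: in the forward direction you differentiate $g=\Nabla df$, combine with the constant-curvature Codazzi solution, and then use the $\Nabla$-affine freedom of the Hessian potential to pass from the contracted identity to the full tensor equation, exactly as in the paper's argument around~\eqref{eq:magic.P.formula.reduced}. Your converse is organised slightly differently---you verify flatness of $\Nabla=\nabla-\star$ directly from the prolongation system, whereas the paper appeals to the preceding Lemma and~\eqref{eq:difference.of.potentials}---but the content is the same computation (note that the $\nabla\star$-terms in $R^\Nabla$ already cancel by potentiality, so the prolongation system is really only needed to identify the $\star\star$-commutator with $-R$).
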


\begin{proof}
    In a \HMF manifold $(M,g,\star)$, we have that the (flat) connection $\Nabla=\nabla-\star$ defines the Hessian structure $(M,g,\Nabla)$. Hence, locally, there is a function $\phi$ such that
    \[
        g_{ij}=(\Nabla d\phi)_{ij}\,,\qquad -2P_{ijk}=\Nabla^3_{ijk}\phi
    \]
    and
    \[
        P_{ijk}=\nabla^3_{ijk}\phi
        +\kappa\,(2g_{ij}\nabla_k\phi+g_{ik}\nabla_j\phi+g_{jk}\nabla_i\phi)
    \]
    where $P_{ijk}=g_{ka}\star_{ij}^a$.
    We compute
    \begin{align*}
        -2P_{ijk} &= \Nabla^3_{ijk}\phi
        = \nabla^3_{ijk}\phi+\nabla_k\hat P_{ij}^a\nabla_a\phi+P_{ijk}-\hat P_{ij}^b\hat P_{bk}^a\nabla_a\phi+2P_{ijk}\,,
    \end{align*}
    and hence
    \begin{equation}\label{eq:magic.P.formula}
        -P_{ijk} = \nabla^3_{ijk}\phi+\nabla_kP_{ij}^a\nabla_a\phi-P_{ij}^bP_{bk}^a\nabla_a\phi\,.
    \end{equation}
    Combining this with
    \[
        P_{ijk}=\nabla^3_{ijk}\psi
        +\kappa\,(2g_{ij}\nabla_k\psi+g_{ik}\nabla_j\psi+g_{jk}\nabla_i\psi)\,,
    \]
    we obtain
    \begin{equation}\label{eq:magic.P.formula.reduced}
        0 = \nabla^3_{ijk}(\phi-\phi)
        =\left(
            \nabla_kP_{ij}^a-P_{ij}^bP_{bk}^a
            -\kappa\,(2g_{ij}g_k^a+g_{ik}g_j^a+g_{jk}g_i^a)
        \right)\nabla_a\phi\,.
    \end{equation}
    Now, observe that $\phi$, being the Hessian potential, is determined up to $\Nabla$-affine changes. Hence, by changing the Hessian potential, we can obtain a basis of $\Omega^1(M)$. It follows that
    \begin{equation*}
        \nabla_kP_{ij}^a =
        P_{ij}^bP_{bk}^a
        +\kappa\,(2g_{ij}g_k^a+g_{ik}g_j^a+g_{jk}g_i^a)\,.
    \end{equation*}
    \medskip
    
    \noindent Conversely, assuming this identity, note that \eqref{eq:MFmfd.curv.cond} follows under the hypothesis. Hence, $(g,\star)$ defines a \CMF structure. Resubstituting \eqref{eq:HMF.condition} it into~\eqref{eq:difference.of.potentials}, we arrive at
    \begin{equation*}
        \nabla^3_{ijk}(\psi+\phi) = 
        -\kappa\,(2g_{ij}\nabla_k\phi+g_{ik}\nabla_j\phi+g_{jk}\nabla_i\phi)
        -\kappa\,(2g_{ij}\nabla_k\psi+g_{ik}\nabla_j\psi+g_{jk}\nabla_i\psi)\,.
    \end{equation*}
    Therefore,
    \begin{equation*}
        \nabla^3_{ijk}(\psi+\phi) = 
        -\kappa\,(2g_{ij}\nabla_k(\psi+\phi)+g_{ik}\nabla_j(\psi+\phi)+g_{jk}\nabla_i(\psi+\phi))\,.
    \end{equation*}
    For any Hessian potential $\phi$, $\psi=-\phi$ is a solution, and we conclude that the structure is \HMF.
\end{proof}

\begin{rmk}
    Note that our definition of a \HMF structure requires any Hessian potential to also be a Frobenius potential. If we only require to find one such Hessian potential $\phi$, it has to be such that
    \begin{equation}\label{eq:weak.HMF.condition}
        \left( \nabla_k\star_{ij}^\ell -\star_{ij}^a\star_{ak}^\ell
                -\kappa\,(2g_{ij}g_k^\ell+g_{ik}g_j^\ell+g_{jk}g_i^\ell) \right)(d\phi)_\ell = 0\,.
    \end{equation}
    This guarantees that
    \[
        \nabla^3_{ijk}(\psi+\phi)
        +\kappa\,(2g_{ij}\nabla_k(\psi+\phi)+g_{ik}\nabla_j(\psi+\phi)+g_{jk}\nabla_i(\psi+\phi))=0\,,
    \]
    and hence $\psi-(\psi+\phi)=-\phi$ is also an admissible Frobenius potential.
    Note that, if the data $(g,\kappa,\star)$ and a Hessian potential $\phi'$ in $\Nabla$-affine coordinates are known, \eqref{eq:weak.HMF.condition} is a linear condition on the $\Nabla$-affine correction to $\phi'$ that yields $\phi$.
\end{rmk}

Let us now turn our attention to \eqref{eq:HMF.condition}. We characterize the initial data from which suitable structures can be locally obtained.

\begin{cor}
    Consider a Riemannian manifold $(M,g)$ of constant sectional curvature $\kappa$. Moreover, let $p\in M$ and let $b=g_p$ be the metric in $p$. Assume that $(T_pM,+,\diamond)$ is an algebra such that $+$ is the usual addition on $T_pM$ and
    \begin{itemize}
        \item $\diamond$ is commutative
        \item $\diamond$ is $b$-compatible, i.e.\ $b(u\diamond v,w)=b(u,v\diamond w)$ for $v,v,w\in T_pM$
        \item the associator of $\diamond$ satisfies
        \[
            \Ass_\diamond(u,v,w)=g(u,w)v-g(v,w)u\,.
        \]
    \end{itemize}
    Then there is a neighborhood $U\subset M$ of $p$ with a Hessian structure
    \[
        (U,g|_U,\Nabla)\,,
    \]
    such that $(M,g,\star=\nabla-\Nabla)$ defines a \HMF manifold.
\end{cor}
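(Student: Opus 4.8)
The plan is to reduce the statement to the prolongation characterization established in the theorem above. By that result it suffices to produce, on some neighborhood $U$ of $p$, a commutative and $g$-compatible $(1,2)$-tensor field $\star$ with $\nabla\star$ symmetric that satisfies the first-order system \eqref{eq:HMF.condition} and restricts at $p$ to the prescribed product, $\star_p=\diamond$; the Hessian connection is then $\Nabla=\nabla-\star$, and $(M,g,\star)$ is automatically \HMF. Thus the corollary becomes a local existence problem for \eqref{eq:HMF.condition} with the given initial value.

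First I would observe that \eqref{eq:HMF.condition} is a \emph{closed} first-order system: writing
\[
    F_{ijk}^\ell(\star)=\star_{ij}^a\star_{ak}^\ell+\kappa\,(2g_{ij}g_k^\ell+g_{ik}g_j^\ell+g_{jk}g_i^\ell),
\]
the equation reads $\nabla_k\star_{ij}^\ell=F_{ijk}^\ell(\star)$, so the full covariant $1$-jet of $\star$ is an algebraic function of $\star$ and the fixed background $(g,\kappa)$. The system is therefore of finite type, with no further prolongation required. I would encode it as a rank-$n$ distribution $\mathcal H$ on the total space of the bundle $E$ of $(1,2)$-tensors, spanned by the lifts $\partial_k+F_{ijk}^\ell\,\partial/\partial\star_{ij}^\ell$; integral manifolds of $\mathcal H$ are exactly graphs of solutions, and by the Frobenius theorem such an integral manifold through $(p,\diamond)$ exists precisely when $\mathcal H$ is involutive. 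A local solution with $\star_p=\diamond$ then exists and is unique.

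The main obstacle is verifying involutivity, i.e.\ the integrability condition for \eqref{eq:HMF.condition}. I would extract it by comparing two expressions for the commutator of covariant derivatives. On the one hand, the Ricci identity gives $[\nabla_m,\nabla_k]\star_{ij}^\ell$ as the curvature acting on $\star$, which for constant sectional curvature $\kappa$ is an explicit $\kappa$-multiple of $g$-contractions of $\star$. On the other hand, differentiating \eqref{eq:HMF.condition} and re-substituting yields
\[
    \nabla_m F_{ijk}^\ell-\nabla_k F_{ijm}^\ell
    =F_{ijm}^a\star_{ak}^\ell+\star_{ij}^aF_{akm}^\ell-(k\leftrightarrow m).
\]
Involutivity is the assertion that these two expressions coincide. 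Expanding the quadratic terms and using commutativity and $g$-compatibility, this reduces to an algebraic identity equivalent to the associator condition $\Ass_\diamond(u,v,w)=g(u,w)v-g(v,w)u$ — equivalently, to the curvature condition \eqref{eq:MFmfd.curv.cond} in the $\mathfrak{so}(n)$-valued normal form of $\As$ supplied by the earlier lemma. Crucially, the difference of the two sides is tensorial (algebraic) in $\star$, so it suffices to check its vanishing at $p$, where $\star=\diamond$ satisfies the hypothesis by assumption; the background curvature being \emph{constant} is precisely what makes the $\kappa$-terms cancel. This is the computational heart of the argument.

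Finally I would confirm that the three structural properties propagate off $p$, so that the solution is genuinely a \CMF, and hence \HMF, product. Commutativity, total symmetry of $P=\star^\flat$, and the associator identity each cut out an $\mathcal H$-invariant affine-algebraic subvariety of $E$: using \eqref{eq:HMF.condition} one checks that the covariant derivative of each defect — the antisymmetric part $\star_{[ij]}^\ell$, the antisymmetric part of $P$, and the associator minus its prescribed value — vanishes whenever the defect itself does, so each property, holding at $p$ for $\diamond$, persists on $U$. The symmetry of $\nabla\star$ is then immediate, since \eqref{eq:HMF.condition} exhibits $\nabla_k\star_{ij}^\ell$ as symmetric in $i,j,k$ once $P$ is totally symmetric. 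With all hypotheses of the converse direction of the theorem in force, $(U,g|_U,\star)$ is \HMF and $\Nabla=\nabla-\star$ is the asserted Hessian connection.
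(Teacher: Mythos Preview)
Your approach matches the paper's: verify the Ricci identity for the finite-type system \eqref{eq:HMF.condition}, observe that the resulting algebraic obstruction vanishes thanks to the associator hypothesis, and invoke a local existence theorem (Goldschmidt in the paper, Frobenius on the total space in your version). Your last paragraph---showing that commutativity, total symmetry of $P$, and the associator identity cut out an $\mathcal H$-invariant locus---actually supplies a detail the paper leaves implicit; logically it should precede the Frobenius step, since involutivity of $\mathcal H$ holds only on that invariant locus and not on all of $E$, so your assertion that ``it suffices to check its vanishing at $p$'' is not quite right standing alone.
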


\begin{proof}
    We consider the Ricci identity for \eqref{eq:HMF.condition}.
    Indeed, taking a further derivative and antisymmetrizing, we obtain
    \begin{multline*}
        \left( -\nabla_l\nabla_kP_{ij}^a+\nabla_lP_{ij}^b\ P_{bk}^a +P_{ij}^b\ \nabla_lP_{bk}^a \right)
        -\left( -\nabla_k\nabla_lP_{ij}^a+\nabla_kP_{ij}^b\ P_{bl}^a +P_{ij}^b\ \nabla_kP_{bl}^a \right)
        \\
        =  R^b_{ikl}P_{bj}^a+R^b_{jkl}P_{ib}^a+P_{ij}^bR^a_{bkl}
            +\nabla_lP_{ij}^b\ P_{bk}^a
            +\cancel{ P_{ij}^b\ \nabla_lP_{bk}^a }
            -\nabla_kP_{ij}^b\ P_{bl}^a
            -\cancel{ P_{ij}^b\ \nabla_kP_{bl}^a }
        \\
        = R^b_{ikl}P_{bj}^a+R^b_{jkl}P_{ib}^a+P_{ij}^bR^a_{bkl}
            +P_{ij}^c\left( P_{cl}^bP_{bk}^a-P_{ck}^bP_{bl}^a \right)
            \\
            +\kappa\left(
                g_{il}P_{jk}^a+g_{jl}P_{ik}^a-g_{ik}P_{jl}^a-g_{jk}P_{il}^a
            \right)
        =0
    \end{multline*}
    Hence, at $p$, the integrability condition for \eqref{eq:HMF.condition} is always satisfied due to the (non-)associa\-tivity of $\diamond$.
    Using a result in \cite{Goldschmidt1967}, we conclude that the initial data $\diamond$ thus can be extended in a small neighborhood $U\ni p$ to a product $\star$ such that $(g|_U,\star)$ defines a \HMF structure on $(U,g|_U)$.
\end{proof}

The latter characterizes \HMF manifolds as \CMF manifolds whose product is determined by that in any single point and fitted together in a way that ensures that the Frobenius and the Hessian potential are (up to sign) identical.

\section{Characterizing \CMF manifolds}

We conclude our study with a characterization of \CMF manifolds of positive signature.
In preparation, consider a \CMF manifold $(M,g,\star)$ such that $\star$ is not associative and $\mu>0$. Then it is a solution of
\begin{align*}
    P_{ijk} &= P_{(ijk)} \\
    \nabla_k P_{ij}^\ell &= \nabla_{(k}P_{ij)}^\ell \\
    P_{ik}^a P_{ja}^\ell-P_{jk}^a P_{ia}^\ell &= R_{ijk}^b
\end{align*}
and vice versa.

\begin{defn}\label{defn:skew-hessian}
    For the purposes of this paper, we call a triple $(M,g,\Nabla)$ consisting of a Riemannian manifold $(M,g)$ and a torsion-free connection $\Nabla$ a \emph{skew-Hessian manifold}, if
    \begin{enumerate}
        \item the $g$-dual connection $\Nabla^*$ of $\Nabla$ is torsion-free as well,
        \item the curvature of $\Nabla$ is non-vanishing, and
        \item the curvature of $\Nabla$ is twice the curvature of $g$.
    \end{enumerate}
\end{defn}

This provides us with a characterization of the remaining cases of \CMF manifolds

\begin{thm}\label{thm:skew-hessian}
    \CMF manifolds correspond to skew-Hessian structures. More precisely: 
    \begin{enumerate}
        \item A skew-Hessian manifold defines a \CMF manifold with $\mu=1$ via
        \[
            \star=\Nabla-\nabla.
        \]
        \item A \CMF manifold with $\mu=1$ is a skew-Hessian manifold, if its underlying metric is non-flat.
    \end{enumerate}
\end{thm}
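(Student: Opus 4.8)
The plan is to realize both implications through the single difference tensor $\star=\Nabla-\nabla$ together with the curvature comparison formula for two connections differing by a $(1,2)$-tensor. Writing $\star_X$ for the endomorphism $Z\mapsto\star(X,Z)=X\star Z$, a direct computation with $\Nabla=\nabla+\star$ (using $\nabla$ torsion-free) yields
\[
    R^\Nabla(X,Y)Z=R(X,Y)Z+D(X,Y,Z)+[\star_X,\star_Y]Z,
\]
where $D(X,Y,Z)=(\nabla_X\star)(Y,Z)-(\nabla_Y\star)(X,Z)$ is the antisymmetrization of $\nabla\star$. The whole theorem then reduces to a dictionary between the three defining properties of a skew-Hessian structure and the four axioms of Definition~\ref{defn:MFmfd} with $\mu=1$: torsion-freeness of $\Nabla$ corresponds to commutativity of $\star$; torsion-freeness of the dual $\Nabla^*$ corresponds to $g$-compatibility; and the relation $R^\Nabla=2R$ encodes \emph{simultaneously} the potentiality of $\star$ and the identity $[\star_X,\star_Y]=R(X,Y)$.

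For the first claim I start from a skew-Hessian manifold and set $\star=\Nabla-\nabla$. Since $\Nabla$ and $\nabla$ are both torsion-free, $\star$ is a symmetric $(1,2)$-tensor, i.e.\ commutative. To obtain $g$-compatibility I use that the dual connection is $\Nabla^*=\nabla+\star^*$ with $g(\star_XY,Z)+g(Y,\star^*_XZ)=0$; torsion-freeness of $\Nabla^*$ makes $\star^*$ symmetric, and unwinding this relation shows that $P(X,Y,Z)=g(\star_XY,Z)$ is totally symmetric (equivalently $\star^*=-\star$), which is exactly $g$-compatibility. Finally I insert $R^\Nabla=2R$, i.e.\ $D+[\star,\star]=R$, into the curvature formula and decompose by symmetry type in the pair $(Z,W)$ after pairing with $g$: since $P$ is totally symmetric, $g(D(X,Y)Z,W)$ is \emph{symmetric} in $Z,W$, whereas both $g([\star_X,\star_Y]Z,W)$ (using that each $\star_X$ is $g$-self-adjoint) and $g(R(X,Y)Z,W)$ are \emph{skew} in $Z,W$. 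Matching the two symmetry types forces $D=0$ (the potentiality property) and $[\star_X,\star_Y]=R(X,Y)$; thus $(M,g,\star)$ is a \CMF manifold with $\mu=1$, and condition (2) of the skew-Hessian axioms guarantees $R\ne0$, so that the value $\mu=1$ is genuinely determined rather than an artefact of $R\equiv0$.

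For the converse I start from a \CMF manifold with $\mu=1$ and set $\Nabla=\nabla+\star$. Commutativity makes $\Nabla$ torsion-free, and $g$-compatibility gives $\star^*=-\star$, whence $\Nabla^*=\nabla-\star$ is torsion-free as well, establishing condition (1). The potentiality property is precisely $D=0$, and $\mu=1$ is $[\star_X,\star_Y]=R(X,Y)$; substituting both into the curvature formula yields $R^\Nabla=2R$, which is condition (3). Finally, if $g$ is non-flat then $R\ne0$, so $R^\Nabla=2R\ne0$, giving the non-degeneracy condition (2). This is exactly where the non-flatness hypothesis enters.

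The routine part is the connection bookkeeping: the curvature formula above and the identifications $\star^*=-\star$, $\Nabla^*=\nabla-\star$. The conceptual crux, and the step I expect to require the most care, is the derivation of potentiality ($D=0$) in the first implication: it is not assumed among the skew-Hessian axioms but must be extracted from $R^\Nabla=2R$. The key point that makes this work is that the three summands of the curvature formula have definite and distinct symmetry types once paired with $g$ in the last two slots, so that a single curvature identity splits cleanly into the potentiality equation and the associator/curvature relation. I would double-check the $g$-self-adjointness of each $\star_X$ and the symmetry of $\nabla P$ in its last three slots, as these are precisely what pin down the symmetry types used in the splitting.
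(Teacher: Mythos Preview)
Your proposal is correct and follows essentially the same route as the paper: both arguments use the curvature comparison $R^{\Nabla}=R+D+[\star,\star]$ together with the fact that, once $P=\star^\flat$ is totally symmetric (from torsion-freeness of $\Nabla$ and $\Nabla^*$), the three pieces separate by their symmetry type in the last pair of arguments---the paper phrases this step as ``the curvature tensor of $\Nabla$ decomposes according to irreducible representations of the symmetric group,'' which is exactly your $(Z,W)$-symmetry splitting. If anything, your version is more explicit than the paper's about why $D=0$ (potentiality) is forced, so the point you flagged as the ``conceptual crux'' is handled cleanly.
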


\begin{proof}
    For the first claim, we observe that the torsion-freeness of $\Nabla$ and $\Nabla^*$ implies that $P$ defined by $g(P(X,Y),Z)=g(\Nabla_XY-\nabla_XY,Z)$ ($X,Y,Z\in\mathfrak X(M)$, is totally symmetric, which implies that $\star$ is commutative and $g$-compatible.
    Note that the curvature tensor of~$\Nabla$ decomposes, according to irreducible representations of the symmetric group.
    Hence, the condition that the curvature tensor of $\Nabla$ is a non-zero multiple of $R^g$,
    \[
        R^\Nabla=\lambda R^g\,,
    \]
    decomposes into two independent conditions:
    firstly, $P$ has to be a Codazzi tensor with respect to $\nabla$, and hence $\star$ satisfies the potentiality property.
    Secondly, we have (with $X,Y\in\mathfrak X(M)$)
    \[
        R^g(X,Y)+[\star(X),\star(Y)]=2\,R^g(X,Y).
    \]
    Therefore, \eqref{eq:MFmfd.curv.cond} is satisfied with $\mu=1$.

    Under the hypothesis of the second claim, we have
    \[
        [\star(X),\star(Y)]=R^g(X,Y)\,,
    \]
    which does not vanish.
    We define $\Nabla=\nabla+\star$.
    Since $P=\star^\flat$ is totally symmetric, $\Nabla$ and $\Nabla^*$ are therefore torsion-free.
    Moreover, we find
    \[
        R^\Nabla(X,Y)=R^g(X,Y)+[\star(X),\star(Y)]=2\,R^g(X,Y)
    \]
    and hence that $R^\Nabla$ is non-vanishing. The theorem is therefore proved.
\end{proof}

We now obtain the following characterization of \CMF manifolds.
\begin{cor}
    A \CMF manifold falls into precisely one of the following cases
    \begin{itemize}
        \item its normalization is a Hessian manifold in the sense of Example~\ref{ex:hessian}, or
        \item its underlying metric is non-flat and $\star$ is associative, or
        \item its normalization is a skew-Hessian manifold.
    \end{itemize}
\end{cor}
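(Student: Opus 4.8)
The plan is to run a case analysis organized by the flatness of $g$ and, when $g$ is non-flat, by the sign of $\mu$ in \eqref{eq:MFmfd.curv.cond}, feeding each case into the correspondences already established. First I would pass to the normalization $(M,g,\circ)$ of Lemma~\ref{lem:normal.form}, so that $[\circ(X),\circ(Y)] = \varepsilon R(X,Y)$ with $\varepsilon = \sign(\mu)$. The observation that drives the argument is that associativity of the product is equivalent to the vanishing of $[\star(X),\star(Y)]$, because $\Ass(X,Y,Z) = [\star(Z),\star(X)]Y$. Combined with \eqref{eq:MFmfd.curv.cond}, this shows that a non-associative product forces $R \neq 0$ and $\mu \neq 0$; equivalently, whenever $g$ is flat the product is automatically associative, so the combination ``flat and non-associative'' cannot arise.

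With this dichotomy in hand, the three cases should fall out directly. Suppose first that $g$ is non-flat, so that $\mu$ is pinned down by \eqref{eq:MFmfd.curv.cond}. If $\mu < 0$, I would apply Lemma~\ref{lem:HMF}, item~\ref{item:HMF.1}, identifying the normalization with a Hessian manifold in the sense of Example~\ref{ex:hessian}, which is the first case. If $\mu > 0$, Theorem~\ref{thm:skew-hessian} shows the normalization is skew-Hessian, which is the third case. If $\mu = 0$, the product is associative and we land in the second case. Suppose instead that $g$ is flat; then the product is associative, and Lemma~\ref{lem:HMF}, item~\ref{item:HMF.2}, shows that after rescaling the structure again arises from Example~\ref{ex:hessian}, placing it in the first case.

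What remains is to verify that the three cases are mutually exclusive, and this is where I expect the only real subtlety. Both the first and second cases may carry an associative product, so the flat structures of the first case must be separated cleanly from the non-flat associative structures of the second. I would resolve this with the intrinsic invariants $R = 0$ versus $R \neq 0$ and the sign of $\mu$ (well-defined once $R \neq 0$): the first case is exactly $g$ flat or $\mu < 0$, the second is $g$ non-flat with $\mu = 0$, the third is $g$ non-flat with $\mu > 0$, so no two cases meet. To make the exclusivity fully robust I would check that the trivial normalization $\circ = 0$ attached to a second-case structure is neither Hessian nor skew-Hessian: $\star = 0$ would force $\Nabla = \nabla$, which is Hessian only if it is flat, contradicting $R \neq 0$, and skew-Hessian only if $R^\Nabla = 2R^g$, which $\Nabla = \nabla$ violates unless $g$ is flat. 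Exhaustiveness is then immediate, since every \CMF manifold is flat or carries a determined sign of $\mu$.
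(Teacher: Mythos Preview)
Your proposal is correct and follows essentially the same route as the paper: the paper's one-line proof simply cites Example~\ref{ex:mu=0}, Lemma~\ref{lem:HMF} and Theorem~\ref{thm:skew-hessian}, and your case analysis on flatness and $\sign(\mu)$ is exactly the unpacking of that citation. Your added verification of mutual exclusivity (checking that the trivial normalization in the second case is neither Hessian nor skew-Hessian) is a welcome supplement, since the paper asserts ``precisely one'' without spelling this out.
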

\begin{proof}
    This follows immediately from Example~\ref{ex:mu=0}, together with Lemma~\ref{lem:HMF} and Theorem~\ref{thm:skew-hessian}.
\end{proof}

As a consequence, it appears justified to say that there exist few qualitatively different types of \CMF manifolds. Indeed, up to rescalings, \CMF manifolds are either Hessian structures, skew-Hessian structures or they are non-flat and associative. Note that the first condition in Definition~\ref{defn:skew-hessian} makes $(M,g,\Nabla)$ a so-called \emph{statistical manifold}. We can therefore say that the typical example of a \CMF manifold is a statistical manifold such that the curvatures of $\Nabla$ and $\nabla$ are the same up to multiplication by a constant.

\section{Application to second-order superintegrability}

Applications of curved Frobenius manifolds have been described across the literature in various contexts.
Flat (Manin-)Frobenius structures are ubiquitous in the literature on topological and quantum field theory, e.g.\ \cite{Witten1991,DVV1991,Dubrovin1996,Dubrovin1998,Manin1999}. They are also linked to Gromov-Witten theory \cite{Hertling2002_moduli,HM2012_cohom} and information geometry \cite{CM2020}.
Examples with non-vanishing curvature are less frequent, but also appear naturally in applications in physics and geometry \cite{KKLNS2017,KKLNS2018,Kozyrev2019,Mokhov2008}

We report a further class of examples here, namely second-order superintegrable systems. Our discussion below is based on \cite{Vollmer2025_Frobenius} and \cite{CV2025} as well as \cite{AV2025,KSV2023,KSV2024}.
Consider a Riemannian manifold $(M,g)$. The cotangent space $T^*M$ carries a natural symplectic structure $\omega$. We denote by $(x,p)$ canonical Darboux coordinates on $T^*M$. Then the map $H:T^*M\to\mathbb R$,
\[
    H(x,p)=g^{-1}(p,p)+V(x)
\]
is called \emph{Hamiltonian} with potential $V:M\to\mathbb R$.
The vector field $X_H$ on $T(T^*M)$ with $\omega(X_H,-)=dH$ is called the \emph{Hamiltonian vector field} for $H$.

A (maximally) \emph{superintegrable system} is defined by a collection of functions $F^{\nu)}:T^*M\to\mathbb R$, $1\leq\nu\leq2n-2$, such that
\begin{enumerate}
    \item $(H,F^{(1)},\dots,F^{(2n-2)})$ is functionally independent
    \item each $F^{(\nu)}$ is a constant of the motion for $H$, i.e.\ $X_H(F^{(\nu)})=0$.
\end{enumerate}
A superintegrable system is called \emph{second-order} if the constants of the motion can be chosen in the form (we drop the Einstein convention temporarily)
\[
    F^{(\nu)}(x,p)=\sum_{i,j=1}^n K^{(\nu)\,ij}(x)p_ip_j+W^{(\nu)}(x),
\]
where we assume $K^{(\nu)\,ij}=K^{(\nu)\,ji}$ without loss of generality.
One can show that the coefficients $K^{(\nu)\,ij}$ define components of Killing tensors
\[
    K_{(\nu)}=\sum_{i,j=1}^n \left( \sum_{a,b=1}^n g_{ai}g_{bj}K^{(\nu)\,ab}\right)\,dx^i\otimes dx^j
\]
A second-order superintegrable system is called \emph{irreducible}, if the endomorphisms associated to these Killing tensors via $g$ do not share a common invariant subspace.

For irreducible second-order superintegrable systems, there is a tensor field $T=\sum_{i,jk=1}^n T_{ij}^k\,dx^i\otimes dx^j\otimes \partial_k$ which is symmetric in $(i,j)$ and trace-free with respect to $g$ in $(i,j)$, such that (we use the Einstein convention again)
\[
    \nabla^2_{ij}V = T\indices{_{ij}^k}\nabla_kV+\frac1n\,g_{ij}\,\Delta V
\]
Here, $\Delta$ is the Laplace-Beltrami operator.
We define the one-form $t$ via
\[
    t(X)=\mathrm{tr}(Y\to T(X,Y))
\]
and moreover the tensor field
\[
    P(X,Y,Z)=\frac13\,\left(g(T(X,Y),Z)+\frac{1}{n-1}\,g(X,Y)t(Z)\right).
\]
In \cite{KSV2023,KSV2024,KSV2024_2D}, the structural equations for irreducible second-order superintegrable systems are obtained for the case when all integrability conditions for $V$ and $K_{(\nu)}$ are satisfied generically (these systems are called \emph{abundant}). For $n\geq3$, it is found in \cite{KSV2023,KSV2024} that under this abundantness assumption and for $g$ of constant sectional curvature $\kappa$, $P$ is totally symmetric and satisfies
\begin{multline}\label{eq:sis.alg}
    g^{-1}(P(X,Y),P(Z,W))-g^{-1}(P(X,Z),P(Y,W)) \\
    = -\kappa\,(g(X,Y)g(Z,W)-g(X,Z)g(Y,W))
\end{multline}
for any vector fields $X,Y,Z,W$. Moreover, the tensor $T$ is subject to the differential conditions (in index notation, $\circ$ denotes the trace-free part)
\begin{subequations}\label{eq:sis.diff}
    \label{eq:prolongation:T}
    \begin{align}
        \label{eq:DT:DS}
        \mathring{T}_{ijk,l}
            &=\frac1{13}\,\Pi_{(ijk)_\circ}
            \bigg[
                \mathring{T}\indices{_{ij}^a}\mathring{T}_{kla}
                + \mathring{T}_{ijk}\bar t_l
                +3\,\mathring{T}_{ijl}\bar t_k
                \notag \\
            &\qquad\qquad\qquad\qquad +
                \left(
                    \frac4{n-2}\,\mathring{T}\indices{_i^{ab}}\mathring{T}_{jab}
                    -3\,\mathring{T}\indices{_{ij}^a}\bar t_a
                \right)g_{kl}
            \bigg]\\
        \label{eq:DT:Dt}
        \bar t_{k,l}
            &=\frac13
              \left(
                -\frac2{n-2}\mathring{T}\indices{_k^{ab}}\mathring{T}_{lab}
                +3\mathring{T}\indices{_{kl}^a}\bar t_a
                +4\bar t_k\bar t_l
              \right)_\circ
              \notag \\
            &\qquad
                +\frac1ng_{kl}\,\left(
                    \frac{3n+2}{6(n+2)(n-1)}\,\mathring{T}^{abc}\mathring{T}_{abc}
                    -\frac{n-2}6\,\bar t^a\bar t_a
                    +\frac{3}{2(n-1)}\,R
                \right)
    \end{align}
\end{subequations}
where $\Pi_{(ijk)_\circ}$ is the projector onto the symmetric trace-free part with respect to the enclosed indices, and where $\bar t=\frac{n}{(n-1)(n+2)}\,t$. For details see \cite{KSV2023}.

Now assume that $(M,g)$ is a Riemannian manifold of dimension $n\geq3$ and of constant sectional curvature and that $P$ is a totally symmetric, cubic tensor field on $M$ satisfying~\eqref{eq:sis.alg} and~\eqref{eq:sis.diff}.
Let $X\star Y:=P(X,Y,-)$ for vector fields $X,Y$ on $M$.
It was shown in~\cite{AV2025} that $\Nabla=\nabla-\star$ defines a Hessian structure $(g,\Nabla)$ and that, for any Hessian potential~$\phi$,
\[
    P=\Nabla^3\phi.
\]
Hence, we are in the case of a \HMF structure and conclude
\begin{equation}\label{eq:sis.diff.better}
    \nabla_l C_{ijk}=C_{ija}g^{ab}C_{bkl}
        +\kappa\,(2g_{ij}g_{kl}+g_{ik}g_{jl}+g_{jk}g_{il})\,.
\end{equation}

\begin{rmk}
    Note that \eqref{eq:sis.diff.better} implies both~\eqref{eq:sis.alg} and~\eqref{eq:sis.diff}. We remark that the form \eqref{eq:sis.diff.better} can also be obtained similarly to the respective computation in \cite{Vollmer2025_Frobenius}, where the flat case is discussed (i.e.\ the case of Manin-Frobenius manifolds).
\end{rmk}

We therefore obtain:
\begin{thm}
    On Riemannian manifolds of constant sectional curvature,
    abundant second-order superintegrable systems correspond 1-to-1 to \HMF structures.
\end{thm}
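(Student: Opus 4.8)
The plan is to exhibit the correspondence as two mutually inverse constructions, one sending an abundant system to an \HMF structure and one going back. For the forward direction I would start from an abundant second-order superintegrable system on $(M,g)$ with $g$ of constant sectional curvature $\kappa$ and $n\ge 3$, read off the structure tensor $T$ from the prolongation $\nabla^2_{ij}V=T\indices{_{ij}^k}\nabla_kV+\tfrac1n g_{ij}\Delta V$, and form the totally symmetric cubic tensor $P$ as above. The structural results of \cite{KSV2023,KSV2024} guarantee that $P$ satisfies \eqref{eq:sis.alg} together with \eqref{eq:sis.diff}; setting $X\star Y:=P(X,Y,-)$ and invoking \cite{AV2025} to identify $\Nabla=\nabla-\star$ as a flat Hessian connection with $P=\Nabla^3\phi$, the characterization theorem then certifies that $(M,g,\star)$ is an \HMF manifold and that \eqref{eq:sis.diff.better} holds.

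For the converse I would take an \HMF structure $(M,g,\star)$ of constant sectional curvature $\kappa$, put $P=\star^\flat$, and invert the defining relation $3P_{ijk}=T_{ijk}+\tfrac1{n-1}g_{ij}t_k$ to recover the superintegrability data. Tracing over $(i,j)$ and using that $T$ is $g$-trace-free in its first two slots gives $t_k=\tfrac{3(n-1)}{n}g^{ab}P_{abk}$, whence $T_{ijk}=3\bigl(P_{ijk}-\tfrac1n g_{ij}\,g^{ab}P_{abk}\bigr)$, the scaled trace-free part of $P$ in $(i,j)$. This $T$ is manifestly symmetric and trace-free in its first two indices, and I would verify that the prolongation \eqref{eq:sis.diff.better} for $P$ descends to \eqref{eq:sis.diff} for $T$, so that the Bertrand--Darboux system for $V$ is closed of finite type and its solution space supplies the potentials of an abundant system.

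Bijectivity then follows because $T\mapsto P$ and $P\mapsto T$ are explicit, mutually inverse linear maps between totally symmetric and symmetric-trace-free cubic tensors, as a short trace computation confirms. I would be careful to phrase the correspondence at the level of structural data: a structure tensor $T$ determines not a single potential but the whole finite-dimensional space of compatible potentials, so the statement is read as a bijection between abundant \emph{structures} (equivalently, the products $\star$) and \HMF structures.

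The main obstacle is the equivalence, merely asserted in the preceding remark, between the pair \eqref{eq:sis.alg}, \eqref{eq:sis.diff} on $T$ and the single prolongation \eqref{eq:sis.diff.better} on $P$. Proving it requires rewriting the symmetric-trace-free projections $\Pi_{(ijk)_\circ}$ and the auxiliary one-form $\bar t$ of \eqref{eq:DT:DS} and \eqref{eq:DT:Dt} in terms of the totally symmetric $P$: I expect the trace-free part of \eqref{eq:sis.diff.better} to reproduce \eqref{eq:DT:DS}, its pure-trace part to reproduce \eqref{eq:DT:Dt}, and the algebraic constraint \eqref{eq:sis.alg} to arise as the Ricci integrability condition of \eqref{eq:sis.diff.better}, exactly in the manner of the corollary's computation. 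A secondary issue is confirming that the reverse reconstruction produces a genuinely abundant system, including its Killing tensors, rather than a merely formal solution; this rests on the structural classification of \cite{KSV2023,KSV2024}.
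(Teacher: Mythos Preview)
Your proposal is correct and mirrors the paper's own argument: the paper does not give a separate proof block but states the theorem after ``We therefore obtain:'', relying on exactly the forward construction via \cite{KSV2023,KSV2024} and \cite{AV2025} that you describe, and on the preceding Remark that \eqref{eq:sis.diff.better} implies both \eqref{eq:sis.alg} and \eqref{eq:sis.diff} (with reference to \cite{Vollmer2025_Frobenius} for the flat analogue) for the converse. Your explicit inversion $P\mapsto T$ and your identification of the equivalence between \eqref{eq:sis.diff.better} and the pair \eqref{eq:sis.alg}--\eqref{eq:sis.diff} as the crux are more detailed than what the paper spells out, but the strategy is identical, and the paper likewise defers the reconstruction of a genuine abundant system to the structural results of \cite{KSV2023,KSV2024}.
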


\section{Conclusion}

We found that, possibly up to rescaling, \CMF manifolds fall into three distinct types, namely (non-flat) spaces with a commutative and associative, metric-compatible product, Hessian structures, and skew-Hessian structures.
These types all fit under the broader umbrella of statistical manifolds.

In particular, we have investigated the case of \HMF structures, which describe the case of \CMF structures that are consistent with a Hessian structure in the sense that the Hessian potentials serve simultaneously as Frobenius potentials. We have characterized \HMF structures intrinsically via a stronger potentiality property, which also encodes the associator-curvature condition \eqref{eq:MFmfd.curv.cond}.
This case is both geometrically natural and relevant in mathematical physics, which we illustrated with an example from the theory of second-order maximally superintegrable systems.

\section*{Acknowledgements}

I would like to thank John Armstrong, Olaf Lechtenfeld and Andrey Konyaev for discussions. I thank the University of Hanover, the Forschungsfonds of the Department of Mathematics at the University of Hamburg and the SFB/CRC 1624: Higher structures, moduli spaces and integrability for financial support at the University of Hamburg for financial support.

\renewcommand*{\bibfont}{\footnotesize\small}
\printbibliography

\end{document}